\documentclass[12pt]{article}
\usepackage{listings}
\usepackage{url}

\usepackage[english]{babel}
\usepackage{setspace}
\usepackage[letterpaper,top=2cm,bottom=2cm,left=3cm,right=3cm,marginparwidth=1.75cm]{geometry}
\usepackage[dvipsnames,table,xcdraw]{xcolor}

\providecommand{\keywords}[1] 
{
  \small	
  \textit{\textbf{Keywords}}---Periodicity, Distance, Cliques, Connection, Graph Operators
}

\usepackage{amsmath}
\usepackage{graphicx}
\usepackage[colorlinks=true, allcolors=blue]{hyperref}
\usepackage{amsfonts}
\usepackage{amsthm}
\usepackage{amssymb}

\newtheorem{lemma}{Lemma}
\newtheorem{theorem}{Theorem}
\newtheorem{proposition}{Proposition}
\newtheorem{corollary}{Corollary}
\newtheorem{definition}{Definition}

\newtheorem{conjecture}{Conjecture}

\usepackage{graphicx}
\usepackage{qtree}
\usepackage{amsmath}
\usepackage{amsthm}
\usepackage{amssymb}
\usepackage[dvipsnames]{xcolor}
\usepackage{tikz,tkz-euclide}
\usetikzlibrary{decorations.pathreplacing,angles,quotes}
\usetikzlibrary{calc}
\usetikzlibrary{snakes}
\usetikzlibrary{arrows}
\usetikzlibrary{arrows.meta}
\usetikzlibrary{positioning}
\usepackage{mathtools}
\usepackage{algpseudocode}
\usepackage{mathrsfs}
\usepackage{xpatch}
\usetikzlibrary{nfold}
\usepackage[font={footnotesize,stretch=1.3}]{caption}

\usepackage{hyperref} 
\hypersetup{
    colorlinks,
    citecolor=blue,
    filecolor=black,
    linkcolor=blue,
    urlcolor=blue
}

\title{On the Periodicity of $k$-distance Graphs}

\author{Gregory Demo$^{\dag}$, David Lund$^{\dag}$, Oleksiy Al-saadi$^{\ddagger}$ \\
        \small $^{\dag}$Department of Computer Science, Sonoma State University, {\nolinkurl{[demog,lundev]@sonoma.edu}} \\
        \small $^{\ddagger}$Department of Computer Science, Chico State University, {\nolinkurl{oal-saadi@csuchico.edu}} \\
}

\begin{document}

\date{}
\maketitle

\begin{abstract}
The $k$-distance graph $G^k$ of a graph $G$ has the same vertex set as $G$ and two vertices are adjacent if and only if their distance is $k$ in $G$. These graphs have been extensively studied for their connection properties. In this paper, we study various properties of these graphs, including clique number and periodicity. For all $k$, we show that there exists a $k$-distance graph that is weakly periodic for any size period. Our main result is a proof that if $k > 2$ then there exists a $k$-distance graph of any size (strong) period. Finally, we provide evidence that there exists an upper bound on the periodicity of a connected $2$-distance graph as a function of $|V|$. 
\end{abstract}

\keywords{}


\section{Introduction}  

Given a graph $G = (V,E)$, we let $V(G)$ and $E(G)$ denote the vertex and edge sets of $G$, respectively. The $k$-distance graph $H$ of any graph $G$ has $V(G) = V(H)$ where any pair is adjacent if and only if they have distance $k$ in $G$. The class of $k$-distance graphs has been well-studied since their introduction by Harary et al$.$ \cite{Harary} as \emph{exact distance powers}. Since then, many researchers have studied a variety of properties of the $k$-distance graph operator. Simić \cite{Simic} studied graphs whose line graphs are equal to their $k$-distance graphs. Foucaud et al$.$ \cite{FOUCAUD} discovered a strong relationship between the vertex degree of a graph and the clique number of its $k$-distance graph. Zelinka \cite{BohdanZelinka2000} studied the periodicity (see Definition \ref{def:periodicity}) of graph operators, in particular the $k$-distance operator, and showed that a graph $G$ having diameter $2$ is complementary to its $2$-distance graph if the complement of $G$ also has diameter $2$. Azimi et al$.$ \cite{Azimi2017} studied graphs that are their own $2$-distance graphs. As an obvious example, the $2$-distance graph of any odd cycle is isomorphic to the original graph.

Al-saadi and Natal \cite{alsaadi2025diameter} showed that the diameter of $2$-distance graphs is bounded. Bai et al$.$ \cite{BAI2024113493} showed that $2$-distance graphs can be recognized in polynomial-time and studied their complements. Heuvel et al$.$ \cite{VANDENHEUVEL2019143} investigated the chromatic number of $k$-distance graphs. Khormali \cite{KhormaliConnective} proved for any $k$ that if $k > 3$ and a largest cycle in a graph has $3$ vertices then its $k$-distance graph is disconnected.

In this paper, we explore the periods of the $k$-distance family of graphs. Intuitively, the $k$-distance operator can be applied to a connected graph to give a disconnected graph, but the reverse is untrue. Therefore, every graph in a period has the same number of connected components. These properties have been studied before. For example, Jafari and Musawi \cite{Jafari} classified all graphs whose $2$-distance graphs are connected. In order to capture connectedness and the period of the $k$-distance operator, we define the \emph{periodicity number} and the \emph{weak periodicity number} (Definition \ref{def:periodicity}) of a graph. Our results are organized as follows.

In Sect. \ref{sect:Cliques}, we provide a sharp upper bound on the clique number of a connected $2$-distance graph. 

Then, in Sect. \ref{sect:weak}, we show that there exists a weak period of any size under the $k$-distance operator for all $k$ (Theorem \ref{thm:omniWeak}).

Lastly, Sect. \ref{sect:strong} explores strong periodicity. Our main result is that we prove that there exists a strong period of any size under the $k$-distance operator for all $k > 2$ (Theorem \ref{thm:StrongAny}). Along the way, we provide evidence (Conjecture \ref{con:period}) that the maximum periodicity of a $2$-distance graph is $|V| - 3$.

\subsection*{Preliminaries}

Let $G = (V,E)$ be an undirected graph. By $d_G(u,v)$ we denote \emph{geodesic distance}, the length of a shortest path, between the vertices $u$ and $v$ in $G$. $\omega(G)$ denotes the \emph{clique number} of $G$, the size of a largest clique in $G$. Moreover, $\alpha(G)$ denotes the \emph{independence number} of $G$, the size of a largest independent set in $G$. For any $x \in V$, let $N_G(x) = \{y : \{x,y\} \in E\}$ be the neighborhood of $x$. We say $x \in N_G(S)$ if $x$ is adjacent to some vertex in $S$ while $x \not\in S$. We write $x \in N_G[S]$ if $x \in N_G(S) \cup S$. We denote by $C_{N_G}(S)$ the \emph{common neighborhood} of $S$ in $G$. i.e. $C_{N_G}(S) = \bigcap_{v \in S} N_G(v)$. We use $\gcd(a,b) = c$ to express that the \emph{greatest common divisor} between $a$ and $b$ is $c$. Two vertices $u$ and $v$ are \emph{false twins} if $N_G(u) = N_G(v)$ and $u \not\in N_G(v)$. Two vertices $u$ and $v$ are \emph{true twins} if $N_G(u) = N_G(v)$ and $u \in N_G(v)$. 

Let $k > 1$. The $k$-distance operator $D^k$ takes as input a simple graph $G$ and outputs its $k$-distance graph, a graph having the same vertex set $V$ where any pair $u,v$ is adjacent if and only if $d_G(u,v) = k$. Thus, $D^k(G)$ is the $k$-distance graph of $G$. A graph $Y$ is \emph{$k$-distance} if there exists a graph $X$ such that $Y \cong D^k(X)$ (meaning that $Y$ is isomorphic to $D^k(X)$). When specificity is necessary, we write $G_t^k$ to denote the $k$-distance graph of $G$ given by $t - 1$ applications of the $D^k$ operator (it is implied that $G = G_1^k$).

We contrast the standard definition of isomorphism with \emph{labeled isomorphism}. We write $G \equiv H$ if the graphs $G$ and $H$ are isomorphic and the labels of their vertices are preserved.

\begin{definition}
Let $\mathcal{G} = \langle G =  G_1^k,G_2^k,G_3^k,...\rangle$ be a sequence of graphs. Note that $G_i^k \cong D^k(G_{i-1}^k)$ for each $i>1$. Let $f > 1$ and let $G_f^k$ be the first graph in $\mathcal{G}$ such that $G_f^k \cong G_1^k$ $($\emph{resp.} $G_f^k \equiv G_1^k)$, if it exists. We call the sequence $\langle G, \dots, G_{f-1}^k \rangle$ a \emph{(strong) period} \emph{(resp.} \emph{weak period)} and any graph in the sequence is \emph{(strong) periodic} \emph{(resp.} \emph{weak periodic)}.

We say $f-1$ is equal to $\rho^k(G)$, the \emph{periodicity number}
of $G$. If no such $f$ exists, then $\rho^k(G)=0$. Similarly, if
$G_f^k$ is the first graph such that $G_f^k \equiv G_1^k$, then we say
$f-1$ is equal to $\varrho^k(G)$, the \emph{weak periodicity number}
of $G$. If no such $f$ exists, then $\varrho^k(G)=0$.
\label{def:periodicity}
\end{definition}

Under Definition \ref{def:periodicity}, it is always true that $\rho^k(G) \leq \varrho^k(G)$. Later (Proposition \ref{prop:periodIFF}), we formally prove that a graph that belongs to a strong period also belongs to a weak period. 

\begin{figure}[h]
    \centering
    \begin{minipage}{0.18\textwidth}
    \centering
    \begin{tikzpicture}
        \foreach \n in {1,...,15}{
            \node[circle,fill=black,scale=.3] at ({(15-\n)*360/15+174}:1cm) (n\n) {};
        }
        \foreach \n in {2,...,15}{
            \draw (n\n) -- (n\the\numexpr\n-1\relax);
        }
        \foreach \n in {1,3,4,6,7,8,9,10,11,12,13,14,15}{
            \node[scale=.6] at ({(15-\n)*360/15+174}:1.2cm) { $\n$};
        }
        \node[scale=.6] at (126:.8cm) {$2$};
        \node[scale=.6] at (54:.8cm) {$5$};

        \node[scale=.6] at (126:1.5cm) {$\ell$};
        \node[scale=.6] at (54:1.5cm) {$r$};
        
        \node[circle,fill=black,scale=.3] at (126:1.2cm) (nleft) {};
        \node[circle,fill=black,scale=.3] at (54:1.2cm) (nright) {};
        
        \draw (nleft) -- (n1);
        \draw (nleft) -- (n3);
        \draw (nright) -- (n4);
        \draw (nright) -- (n6);
        \draw (n1) -- (n15);
        \node at (0,-1.7) {$G$};
    \end{tikzpicture}
    \end{minipage}
    \hspace{.01\textwidth}
    \begin{minipage}{0.18\textwidth}
    \centering
    \begin{tikzpicture}
        \foreach \n in {1,...,15}{
            \node[circle,fill=black,scale=.3] at ({(15-\n)*360/15+174}:1cm) (n\n) {};
        }
        \foreach \n in {2,...,15}{
            \draw (n\n) -- (n\the\numexpr\n-1\relax);
        }
        \node[scale=.6] at (150:1.2cm) {$1$};
        \node[scale=.6] at (126:1.2cm) {$14$};
        \node[scale=.6] at (102:1.2cm) {$12$};
        \node[scale=.6] at (78:1.2cm) {$10$};
        \node[scale=.6] at (54:1.2cm) {$8$};
        \node[scale=.6] at (30:1.2cm) {$6$};
        \node[scale=.6] at (6:1.2cm) {$4$};
        \node[scale=.6] at (342:.8cm) {$2$};
        \node[scale=.6] at (318:1.2cm) {$15$};
        \node[scale=.6] at (294:1.2cm) {$13$};
        \node[scale=.6] at (270:1.2cm) {$11$};
        \node[scale=.6] at (246:1.2cm) {$9$};
        \node[scale=.6] at (222:1.2cm) {$7$};
        \node[scale=.6] at (198:.8cm) {$5$};
        \node[scale=.6] at (174:1.2cm) {$3$};

        \node[scale=.6] at (342:1.5cm) {$\ell$};
        \node[scale=.6] at (198:1.5cm) {$r$};
        
        \node[circle,fill=black,scale=.3] at (342:1.2cm) (nleft) {};
        \node[circle,fill=black,scale=.3] at (198:1.2cm) (nright) {};
        
        \draw (nleft) -- (n7);
        \draw (nleft) -- (n8);
        \draw (nleft) -- (n9);
        \draw (nright) -- (n13);
        \draw (nright) -- (n14);
        \draw (nright) -- (n15);
        \draw (n1) -- (n15);
        \node at (0,-1.7) {$G_2^{k=2}$};
    \end{tikzpicture}
    \end{minipage}
    \hspace{0.01\textwidth}
    \begin{minipage}{0.18\textwidth}
    \centering
    \begin{tikzpicture}
        \foreach \n in {1,...,15}{
            \node[circle,fill=black,scale=.3] at ({(15-\n)*360/15+174}:1cm) (n\n) {};
        }
        \foreach \n in {2,...,15}{
            \draw (n\n) -- (n\the\numexpr\n-1\relax);
        }
        \node[scale=.6] at (150:1.2cm) {$6$};
        \node[scale=.6] at (126:.8cm) {$2$};
        \node[scale=.6] at (102:1.2cm) {$13$};
        \node[scale=.6] at (78:1.2cm) {$9$};
        \node[scale=.6] at (54:.8cm) {$5$};
        \node[scale=.6] at (30:1.2cm) {$1$};
        \node[scale=.6] at (6:1.2cm) {$12$};
        \node[scale=.6] at (342:1.2cm) {$8$};
        \node[scale=.6] at (318:1.2cm) {$3$};
        \node[scale=.6] at (294:1.2cm) {$15$};
        \node[scale=.6] at (270:1.2cm) {$11$};
        \node[scale=.6] at (246:1.2cm) {$7$};
        \node[scale=.6] at (222:1.2cm) {$4$};
        \node[scale=.6] at (198:1.2cm) {$14$};
        \node[scale=.6] at (174:1.2cm) {$10$};

        \node[scale=.6] at (126:1.5cm) {$\ell$};
        \node[scale=.6] at (54:1.5cm) {$r$};
        
        \node[circle,fill=black,scale=.3] at (126:1.2cm) (nleft) {};
        \node[circle,fill=black,scale=.3] at (54:1.2cm) (nright) {};
        
        \draw (nleft) -- (n1);
        \draw (nleft) -- (n3);
        \draw (nright) -- (n4);
        \draw (nright) -- (n6);
        \draw (n1) -- (n15);
        \node at (0,-1.7) {$G_3^2$};
    \end{tikzpicture}
    \end{minipage}
    \begin{minipage}{0.18\textwidth}
    \centering
    \begin{tikzpicture}
        \foreach \n in {1,...,15}{
            \node[circle,fill=black,scale=.3] at ({(15-\n)*360/15+174}:1cm) (n\n) {};
        }
        \foreach \n in {2,...,15}{
            \draw (n\n) -- (n\the\numexpr\n-1\relax);
        }
       \node[scale=.6] at (150:1.2cm) {$6$};
        \node[scale=.6] at (126:1.2cm) {$14$};
        \node[scale=.6] at (102:1.2cm) {$7$};
        \node[scale=.6] at (78:1.2cm) {$15$};
        \node[scale=.6] at (54:1.2cm) {$8$};
        \node[scale=.6] at (30:1.2cm) {$1$};
        \node[scale=.6] at (6:1.2cm) {$9$};
        \node[scale=.6] at (342:.8cm) {$2$};
        \node[scale=.6] at (318:1.2cm) {$10$};
        \node[scale=.6] at (294:1.2cm) {$3$};
        \node[scale=.6] at (270:1.2cm) {$11$};
        \node[scale=.6] at (246:1.2cm) {$4$};
        \node[scale=.6] at (222:1.2cm) {$12$};
        \node[scale=.6] at (198:.8cm) {$5$};
        \node[scale=.6] at (174:1.2cm) {$13$};

        \node[scale=.6] at (342:1.5cm) {$\ell$};
        \node[scale=.6] at (198:1.5cm) {$r$};
        
        \node[circle,fill=black,scale=.3] at (342:1.2cm) (nleft) {};
        \node[circle,fill=black,scale=.3] at (198:1.2cm) (nright) {};
        
        \draw (nleft) -- (n7);
        \draw (nleft) -- (n8);
        \draw (nleft) -- (n9);
        \draw (nright) -- (n13);
        \draw (nright) -- (n14);
        \draw (nright) -- (n15);
        \draw (n1) -- (n15);
        \node at (0,-1.7) {$G_4^2$};
    \end{tikzpicture}
    \end{minipage}
    \hspace{0.01\textwidth}
    \begin{minipage}{0.18\textwidth}
    \centering
    \begin{tikzpicture}
        \foreach \n in {1,...,15}{
            \node[circle,fill=black,scale=.3] at ({(15-\n)*360/15+174}:1cm) (n\n) {};
        }
        \foreach \n in {2,...,15}{
            \draw (n\n) -- (n\the\numexpr\n-1\relax);
        }
        \foreach \n in {1,3,4,6,7,8,9,10,11,12,13,14,15}{
            \node[scale=.6] at ({(15-\n)*360/15+174}:1.2cm) { $\n$};
        }
        \node[scale=.6] at (126:.8cm) {$2$};
        \node[scale=.6] at (54:.8cm) {$5$};

        \node[scale=.6] at (126:1.5cm) {$\ell$};
        \node[scale=.6] at (54:1.5cm) {$r$};
        
        \node[circle,fill=black,scale=.3] at (126:1.2cm) (nleft) {};
        \node[circle,fill=black,scale=.3] at (54:1.2cm) (nright) {};
        
        \draw (nleft) -- (n1);
        \draw (nleft) -- (n3);
        \draw (nright) -- (n4);
        \draw (nright) -- (n6);
        \draw (n1) -- (n15);
        \node at (0,-1.7) {$G_5^2$};
    \end{tikzpicture}
    \end{minipage}

    \begin{tikzpicture}
        \node (G1) at (-7.5,0) {};
        \node (G2) at (-2.1,0) {};
        \node (G3) at (-1,0) {};
        \node (G4) at (3.9,0) {};
        \node (G5) at (7,0) {};

        \draw[decorate,
           decoration={brace,mirror,amplitude=6pt},
            yshift=-8pt]
            (G1.south west) -- (G2.south east)
            node[midway, below=6pt] {\small (Strong) Period};
        \draw[decorate,
           decoration={brace,mirror,amplitude=6pt},
            yshift=-8pt]
            (G3.south west) -- (G4.south east)
            node[midway, below=6pt] {\small (Strong) Period};

        \draw[decorate,
            decoration={brace,mirror,amplitude=6pt,raise=0.8cm}]
            (G1.south west) -- (G4.south east)
            node[midway, below=1.05cm] {\small Weak Period};
    \end{tikzpicture}
    
    \caption{In the figure above, we show an example of $\mathcal{G}$ as described in Definition \ref{def:periodicity}. Observe that $\langle G, G_2^{k=2} \rangle$ or $\langle G_3^2, G_4^2 \rangle$ are periods and therefore $\rho^2(G) = 2$. The first four graphs make up a weak period (note that $G \equiv G_5^2$) and thus $\varrho^2(G) = 4$.}
    \label{fig:weakVsStrong}
\end{figure}

\section{Cliques in 2-distance Graphs}
\label{sect:Cliques}

In this section, we explore cliques in $2$-distance graphs. First, we show that clique number is upper bounded by the number of vertices minus $3$ in connected $2$-distance graphs, and that this inequality is sharp. Then, we characterize the existence of triangles (cliques of size $3$) in $2$-distance graphs.

\begin{proposition}
If $G = (V,E)$ is a connected $2$-distance graph, then $\omega(G) \leq |V| - 3$.
\label{prop:cliqueNum}
\end{proposition}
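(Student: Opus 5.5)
We argue by contradiction. Fix a graph $H$ on the same vertex set $V$ with $G = D^2(H)$, let $K$ be a maximum clique of $G$, and put $W = V \setminus K$. We suppose $|W| \le 2$ and show that $G$ must then be disconnected. We assume $|V| \ge 2$; for a single vertex the bound is degenerate.

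\emph{Preliminary observations.} First, $H$ is connected. If it were not, vertices in different components of $H$ would be at infinite distance, so every edge of $G$ would lie inside a component of $H$, and $G$ would be disconnected. Second, $K$ is independent in $H$, because its vertices are pairwise at distance exactly $2$. Hence every $H$-neighbour of a vertex of $K$ lies in $W$. Third, any two vertices of $K$ must have a common $H$-neighbour, and that neighbour lies in $W$.

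\emph{Case $|W| \le 1$.} If $W=\emptyset$, then $H$ is edgeless, so $G$ is edgeless and disconnected. If $W=\{w\}$, connectivity of $H$ forces $H$ to be a star centred at $w$. Then $w$ is at distance $1$ from every other vertex, so $w$ is isolated in $G$.

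\emph{Case $W=\{a,b\}$, with $a,b$ adjacent in $H$.} Every vertex of $K$ is adjacent to $a$, to $b$, or to both. The $G$-neighbours of $a$ are exactly the vertices at $H$-distance $2$ from $a$, namely the vertices $x \in K$ with $N_H(x)=\{b\}$. Likewise, the $G$-neighbours of $b$ are the vertices $y\in K$ with $N_H(y)=\{a\}$. If no such $x$ exists, $a$ is isolated in $G$. If both $x$ and $y$ exist, they share no common neighbour, so $d_H(x,y)=3$; this contradicts $x,y \in K$. Hence one of $a,b$ is isolated in $G$.

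\emph{Case $W=\{a,b\}$, with $a,b$ non-adjacent in $H$.} Since $H$ is connected, some $z \in K$ is adjacent to both $a$ and $b$, so $d_H(a,b)=2$. There cannot exist $x,y\in K$ with $N_H(x)=\{a\}$ and $N_H(y)=\{b\}$, since they would have no common neighbour. So, without loss of generality, every vertex of $K$ is adjacent to $a$.
\begin{itemize}
\item The vertex $a$ is then at distance $1$ from all of $K$, so its only $G$-neighbour is $b$.
\item Every vertex of $K$ adjacent to $b$ is at distance $1$ from $b$.
\item Every vertex $x\in K$ with $N_H(x)=\{a\}$ satisfies $d_H(x,b)=3$, via the path $x,a,z,b$.
\end{itemize}
Thus $b$'s only $G$-neighbour is $a$, and $\{a,b\}$ is a connected component of $G$. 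Since $K\neq\emptyset$, $G$ is disconnected.

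In every case $G$ is disconnected, which is a contradiction. Therefore $|W|\ge 3$, that is, $\omega(G)\le |V|-3$.

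The main obstacle is the last subcase. There one must use the clique condition on $K$ (every pair in $K$ needs a common neighbour in $W$) to force all of $K$ onto one vertex of $W$, and then compute distances carefully to isolate the component $\{a,b\}$.
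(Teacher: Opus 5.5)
Your proof is correct, and its skeleton matches the paper's. Both argue by contradiction on the size of $W = V\setminus K$; the paper's Cases 1--3 are your $|W| = 0,1,2$. Both also rest on the same two facts: $K$ is independent in $H$, and any two vertices of $K$ need a common $H$-neighbour in $W$.

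The difference is in how connectivity is used. The paper works with connectivity of $G$ directly. It takes a $G$-edge from an outside vertex into the clique and notes that the forced common $H$-neighbour must lie outside the clique. It then chases these forced neighbours to a contradiction; in Case 3 this ends with two clique vertices that have no common $H$-neighbour. That chase also forces the two outside vertices to be $H$-adjacent, so your non-adjacent subcase is handled implicitly there rather than on its own.

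You instead first transfer connectivity to $H$. This pins $H$ down almost completely: every edge joins $K$ to $W$, apart from possibly $ab$. You then read off the $G$-neighbourhoods of the vertices of $W$ and exhibit the disconnection explicitly, as an isolated vertex or as the component $\{a,b\}$. The paper's route is shorter. Yours needs the extra observation that $H$ is connected, which the paper uses without proof in its Case 1, but it shows concretely how $G$ falls apart in each configuration.

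Finally, your exclusion of $|V|=1$ is necessary, not just a degenerate case. $K_1 = D^2(K_1)$ is connected with $\omega = 1 > |V|-3$, and the paper's Case 1 silently makes the same assumption.
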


\begin{proof}
Suppose not. Let $H$ be a graph such that $D^2(H) = G$. Moreover, let $A$ be a largest clique in $G$. Clearly, $\omega(G)$ cannot have more vertices than $|V|$. Therefore, we consider three cases based on the value of $\omega(G)$.

\textbf{Case 1.} $\omega(G) = |V|$. In this case, $G$ is clearly a complete graph. Note that any edge in $G$ does not exist in $H$. Given that $G$ is a clique, $H$ contains no edges. Thus, $H$ is an independent set. Since $H$ is disconnected, $G$ cannot be connected, a contradiction.

\textbf{Case 2.} $\omega(G) = |V| - 1$. There exists one vertex $d \not\in A$. Since $G$ is connected, we have that $d \in N_G(v)$ where $v \in A$. Note that $d_H(d,v) = 2$, meaning that there exists some $w \in C_{N_H}(d,v)$. However, $v$ neighbors every vertex in $G$ (including $w$), so $w$ cannot exist, a contradiction.

\textbf{Case 3.} $\omega(G) = |V| - 2$. There are exactly two vertices not in $A$. Let $d,b \not\in A$. Since $G$ is connected, at least one of ${d,b}$ must be adjacent to some vertex in $A$. W.l.o.g., let $d$ be adjacent to $a_1 \in A$ in $G$. Thus, there exists a vertex $w \in C_{N_H}(d,a_1)$. If $w \in A$, then $a_1 \not\in N_G(w)$, a contradiction to the assumption that $A$ is a clique in $G$. Therefore, $w \not\in A$. It must be that $w = b$. Observe that $b \in C_{N_H}(d,a_1)$ and hence $b \not\in N_G(d)$ and $b \not\in N_G(a_1)$. So, there exists $a_2 \in A \setminus \{a_1\}$ where $b \in N_G(a_2)$. There also exists $v \in C_{N_H}(b,a_2)$. Of course, $v \not\in A$ in $G$ because $A$ is a clique that contains $a_2$. It must be that $d = v$. We have shown that $b \not\in N_H(a_2)$ and $d \not\in N_H(a_1)$.

Notice that $C_{N_H}(a_1,a_2) \neq \emptyset$. Let $u \in C_{N_H}(a_1,a_2)$. Of course, $u \neq b$ and $u \neq d$ because $b \not\in N_H(a_2)$ and $d \not\in N_H(a_1)$. We have that $u \in A$. But, recall that $u \in C_{N_H}(a_1,a_2)$ implies that $u \not\in N_G(a_1,a_2)$, a contradiction to the fact that $A$ is a clique in $G$.
\end{proof}

Fig. \ref{fig:cliqueStrong} shows that Proposition \ref{prop:cliqueNum} is sharp.

\begin{figure} [h]
    \centering
    \begin{tikzpicture}[scale=1]
        \node[circle,fill=ForestGreen,scale=.7] at (0:0) (center) {};
        \node[circle,fill=blue,scale=.7] at (-1cm,1.5cm) (top) {};
        \node[circle,fill=red,scale=.7] at (1cm,1.5cm) (right) {};
        \node[circle,fill=lightgray,scale=.7] at ([shift={(1.5cm,0)}]right.west) (far) {};
        \node[circle,fill=black,scale=.7] at (-1.5cm,-1.5cm) (bottom1) {};
        \node[circle,fill=black,scale=.7] at (-1cm,-1.5cm) (bottom2) {};
        \node[circle,fill=black,scale=.7] at (-.5cm,-1.5cm) (bottom3) {};
        \node[circle,fill=black,scale=.15] at (0,-1.5cm) (dot1) {};
        \node[circle,fill=black,scale=.15] at (.25cm,-1.5cm) (dot2) {};
        \node[circle,fill=black,scale=.15] at (.5cm,-1.5cm) (dot3) {};
        \node[circle,fill=black,scale=.7] at (1cm,-1.5cm) (bottom4) {};

        \node at (-1cm,1.9cm) {$b$};
        \node at (-1.5cm,-1.9cm) {$a_1$};
        \node at (-1cm,-1.9cm) {$a_2$};
        \node at (-.5cm,-1.9cm) {$a_3$};
        \node at (1cm,-1.9cm) {$a_p$};

        \node at (1cm,-2.36cm) { };

        \draw (center) -- (top);
        \draw (center) -- (right);
        \draw (top) -- (right);
        \draw (right) -- (far);
        \draw (bottom1) -- (center);
        \draw (bottom3) -- (center);
        \draw (center) -- (bottom2);
        \draw (center) -- (bottom4);
    \end{tikzpicture}
    ~~~~~~~~
    \begin{tikzpicture}[scale=1]
        
        \node[circle,fill=red,scale=.7] at (-1.3cm,1.5cm) (top) {};
        \node[circle,fill=blue,scale=.7] at (.7cm,1.5cm) (right) {};
        \node[circle,fill=lightgray,scale=.7] at ([shift={(1.5cm,0)}]right.west) (far) {};
        
        \node[circle,fill=ForestGreen,scale=.7] at ([shift={(1.5cm,0)}]far.west) (bleft) {};
        
        \node[circle,fill=black,scale=.15] at (0,-1.5cm) (dot1) {};
        \node[circle,fill=black,scale=.15] at (.25cm,-1.5cm) (dot2) {};
        \node[circle,fill=black,scale=.15] at (.5cm,-1.5cm) (dot3) {};

        \node[circle,fill=black,scale=.7] at (-1.5cm,-1.5cm) (bottom1) {};
        \node[circle,fill=black,scale=.7] at (-1cm,-1.5cm) (bottom2) {};
        \node[circle,fill=black,scale=.7] at (-.5cm,-1.5cm) (bottom3) {};
        \node[circle,fill=black,scale=.7] at (1cm,-1.5cm) (bottom4) {};

        \node at (0.7cm,1.9cm) {$b$};
        \node at (-1.6cm,-1.9cm) {$a_1$};
        \node at (1.1cm,-1.9cm) {$a_p$};
        
        \draw (top) -- (bottom1);
        \draw (top) -- (bottom2);
        \draw (top) -- (bottom3);
        \draw (top) -- (bottom4);

        \draw (right) -- (bottom1);
        \draw (right) -- (bottom2);
        \draw (right) -- (bottom3);
        \draw (right) -- (bottom4);

        \draw (bottom1) -- (bottom2);
        \draw (bottom1) to[out=-60, in=-120] (bottom3);
        \draw (bottom1) to[out=-70, in=-110] (bottom4);

        \draw (bottom2) -- (bottom3);
        \draw (bottom2) to[out=-60, in=-120] (bottom4);

        \draw (bottom3) to[out=-45, in=-135] (bottom4);

        \draw (right) -- (far);
        \draw (far) -- (bleft);
    \end{tikzpicture}

    \caption{The family of graphs on the right are the $2$-distance graphs of the graphs on the left. On the right, the set $\{a_1,a_2,\dots,a_p,b\}$ forms a clique satisfying $\omega(G) = |V| - 3$. Colors are used to label vertices to provide clarity.}
    \label{fig:cliqueStrong}
\end{figure}

\begin{proposition} Let $G$ be a graph. Then, $G_2^{k=2}$ contains a triangle if and only if $G$ contains either an induced claw or a $C_6$ where $\alpha(C_6) = 3$.
\end{proposition}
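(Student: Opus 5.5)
The plan is to unpack what a triangle in $G_2^{k=2}=D^2(G)$ means in $G$ and prove the two directions separately. I read the condition ``a $C_6$ where $\alpha(C_6)=3$'' as: $G$ contains a (not necessarily induced) $6$-cycle $x_1x_2x_3x_4x_5x_6x_1$ whose alternate vertices $\{x_1,x_3,x_5\}$ form an independent set in $G$. This reading is an assumption; the proof depends on it, since an induced $C_6$ would be too restrictive.

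For $u,v\in V$, $uv$ is an edge of $D^2(G)$ exactly when $u\notin N_G(v)$, $u\neq v$, and $C_{N_G}(u,v)\neq\emptyset$. So a triangle $\{u,v,w\}$ in $D^2(G)$ is the same as three vertices, pairwise non-adjacent in $G$, such that each of the three pairs has a common neighbor in $G$. Both directions follow from this reformulation.

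\emph{($\Leftarrow$).} First suppose $G$ has an induced claw with center $c$ and leaves $x,y,z$. The leaves are pairwise non-adjacent and $c$ is a common neighbor of each pair, so every pair is at distance exactly $2$. Hence $\{x,y,z\}$ is a triangle in $D^2(G)$.

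Next suppose $G$ has a $6$-cycle $x_1\cdots x_6$ with $\{x_1,x_3,x_5\}$ independent. Each pair of these three vertices is non-adjacent and has a common neighbor on the cycle: $x_2$ for $\{x_1,x_3\}$, $x_4$ for $\{x_3,x_5\}$, and $x_6$ for $\{x_5,x_1\}$. So again $\{x_1,x_3,x_5\}$ is a triangle in $D^2(G)$.

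\emph{($\Rightarrow$).} Let $\{u,v,w\}$ be a triangle in $D^2(G)$. Choose $a\in C_{N_G}(u,v)$, $b\in C_{N_G}(v,w)$ and $c\in C_{N_G}(u,w)$.

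None of $a,b,c$ lies in $\{u,v,w\}$. For example, $a$ is adjacent to $u$ and to $v$, so $a\ne u,v$ (no loops), and $a\neq w$ because $w\notin N_G(u)$. The same argument applies to $b$ and $c$.

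If any two of $a,b,c$ coincide, say $a=b$, then $a$ is adjacent to all of $u,v,w$. Since $u,v,w$ are pairwise non-adjacent, $G[\{a,u,v,w\}]$ is an induced claw.

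Otherwise $a,b,c$ are distinct, so $u,a,v,b,w,c$ are six distinct vertices. Then $u\,a\,v\,b\,w\,c\,u$ is a $6$-cycle in $G$ whose alternate vertices $\{u,v,w\}$ are independent, which gives the required $C_6$.

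Nothing here looks technically hard. The main obstacle is interpretive: pinning down the intended meaning of ``$\alpha(C_6)=3$'' and checking that the constructed $6$-cycle genuinely has six distinct vertices. The distinctness check is the step handled by the case split on whether $a,b,c$ coincide.
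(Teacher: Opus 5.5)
Your proof is correct and follows essentially the same route as the paper: both reduce a triangle in $D^2(G)$ to a pairwise non-adjacent triple whose pairs have common neighbors, getting an induced claw when one vertex serves all three pairs and a $C_6$ with independent alternate vertices otherwise. Your split on whether the chosen $a,b,c$ coincide is equivalent to the paper's split on whether $C_{N_G}(\{u,v,w\})$ is empty, and you argue directly where the paper argues by contradiction. You also make explicit two details the paper leaves implicit: the reading of $\alpha(C_6)=3$ as independence of alternate cycle vertices in $G$, and the check that the six cycle vertices are distinct.
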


\begin{proof}

(Necessity) $\Leftarrow$ First, suppose that $G$ has an induced claw $W = \langle t,u,v,w \rangle$ where $t$ is the center vertex. Trivially, the distance between each pair of vertices in $W \setminus \{t\}$ is $2$. This implies that there exists a triangle in $G_2^{k=2}$.
    
Instead, suppose that there exists a $C_6$ having $\alpha(C_6) = 3$ in $G$. Let $T = \langle u,v,w \rangle$ form a stable set in $C_6$. It follows that each pair in $T$ has a unique common neighbor in $G$. This implies that $\{u,v,w\}$ is a triangle in $G_2^{k=2}$, so we are done.

(Sufficiency) $\Rightarrow$ Suppose not and let $T = \langle u,v,w \rangle$ be vertices of a triangle in $G_2^{k=2}$. Certainly, all pairs $a,b \in T$ have $d_G(a,b) = 2$ and $C_{N_G}(a,b) \neq \emptyset$. There are two cases to consider based on the number of unique common neighbors between vertices in $T$.

\textbf{Case 1.} $|C_{N_G}(T)| \geq 1$. Let $x \in C_{N_G}(T)$. Now, $T \cup \{x\}$ is a claw and by assumption it is not induced. However, now for some pair $a,b \in T$ we have $a \in N_G(b)$, a contradiction to the fact that $d_G(a,b) = 2$.
    
\textbf{Case 2.} $|C_{N_G}(T)| = 0$. Consequently, each pair in $T$ has a unique common neighbor. So, $T$ and its common neighbors form a $C_6 = \{u,a,v,b,w,c\}$. If $\alpha(C_6) \neq 3$, then some pair in $T$ is adjacent, a contradiction.
\end{proof}

\section{Weak Periodicity}
\label{sect:weak}

We will show that a weak period of every size exists under the $k$-distance operator for all $k$. In particular, there exists a cycle graph $H$ such that $\varrho^k(H) = p$ for any $p > 1$. This result (Theorem \ref{thm:omniWeak}) builds on a series of lemmas which we will now begin.

\begin{lemma}
Let $G = \langle v_0, v_1, \dots , v_{|V|-1} \rangle$ be a cycle graph. We have that if $|V| > 2k$, then $v_0 \in N_{G_2^k}(v_k)$.
\label{lem:kadj}
\end{lemma}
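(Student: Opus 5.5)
The plan is to compute the distance between $v_0$ and $v_k$ directly in the cycle $G$. By the notation of the Preliminaries, $G_2^k = D^k(G)$ is the $k$-distance graph of $G$. So it suffices to show $d_G(v_0,v_k) = k$; then $\{v_0,v_k\}$ is an edge of $G_2^k$, i.e. $v_0 \in N_{G_2^k}(v_k)$.

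Write $n = |V|$. In a cycle $\langle v_0,\dots,v_{n-1}\rangle$ the edges are $\{v_i,v_{i+1}\}$ with indices taken mod $n$. Any path from $v_0$ to $v_k$ must travel along one of the two arcs. These have lengths $k$ (through $v_1,\dots,v_{k-1}$) and $n-k$ (through $v_{n-1},\dots,v_{k+1}$). Hence
\[
d_G(v_0,v_k) = \min\{k,\, n-k\}.
\]
I would justify this formula briefly. Deleting $v_0$ and $v_k$ leaves two internally disjoint paths, so every $v_0$--$v_k$ walk contains one of the two arcs. Both arcs are themselves paths of the stated lengths.

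Next use the hypothesis $n > 2k$. This gives $n - k > k$, so the minimum is $k$ and $d_G(v_0,v_k) = k$. The condition also guarantees $k < n$, so $v_0 \neq v_k$ and the indices are well defined. By definition of $D^k$, $v_0$ and $v_k$ are therefore adjacent in $G_2^k$.

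There is no real obstacle here. The only point needing a sentence of care is that the cycle's distance is the shorter arc length, and that the strict inequality $n>2k$ rules out the shorter arc being the wrong way around (equality $n=2k$ would still give distance $k$, but the lemma does not need it).
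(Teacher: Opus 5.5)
Your proposal is correct and follows essentially the same argument as the paper. The paper compares the two $v_0,v_k$-paths, of lengths $k$ and $|V|-k$, and uses $|V|>2k$ to conclude that the length-$k$ path is shortest, so $d_G(v_0,v_k)=k$; your sentence on why every $v_0$--$v_k$ walk must contain one of the two arcs is a small justification the paper leaves implicit.
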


\begin{proof}
Clearly, there exist two $v_0,v_k$-paths. One $v_0,v_k$-path $P_1$ clearly has length $k$ and contains $v_1$. Noting that $G$ is a cycle, and thus $v_{|V|-1} \in N_G(v_0)$, the other $v_0,v_k$-path $P_2$ contains $v_{|V|-1}$ and has length $|V| - k$. 

The lengths of $P_1$ and $P_2$, combined with the assumption that $|V| > 2k$, give us that $P_1$ is a shortest path. This immediately implies that $v_0 \in N_{G_2^k}(v_k)$, so we are done.
\end{proof}

Later, we will show (Corollary \ref{lem:modKP}) that Lemma \ref{lem:kadj} can be applied repeatedly while preserving the structure of the original graph $G$ when $G$ is a sufficiently large cycle and $\gcd(k,|V|) = 1$. To achieve this result, we provide the following, intermediate lemma.

\begin{lemma}
Let $G = \langle v_0, v_1, \dots , v_{|V|-1} \rangle$ be a cycle graph having $|V| > 2k$. We have that $G_2^k$ is a cycle if and only if $\gcd(k,|V|) = 1$.
\label{lem:gcd}
\end{lemma}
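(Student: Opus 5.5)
Write $n = |V|$ and index vertices modulo $n$. The plan is first to describe $G_2^k$ exactly, and then to reduce the statement to a fact about the cyclic group $\mathbb{Z}_n$.

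\textbf{Step 1: the edges of $G_2^k$.} In the cycle $G$ we have $d_G(v_i,v_j) = \min(|i-j| \bmod n,\ n - (|i-j| \bmod n))$. Since $n > 2k$, we have $k < n-k$, so this minimum equals $k$ exactly when $j \equiv i \pm k \pmod n$. By the argument of Lemma \ref{lem:kadj}, applied with $v_i$ in place of $v_0$, $v_i$ is adjacent in $G_2^k$ to both $v_{i+k}$ and $v_{i-k}$. These are distinct vertices, because $2k \not\equiv 0 \pmod n$ (as $0 < 2k < n$). Hence $G_2^k$ is $2$-regular, with edge set $\{\{v_i, v_{i+k}\} : i \in \mathbb{Z}_n\}$. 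It is also simple and loopless, since $k \not\equiv 0 \pmod n$.

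\textbf{Step 2: reduction to connectivity.} A finite $2$-regular graph is a disjoint union of cycles. So $G_2^k$ is a cycle if and only if it is connected. The component of $v_0$ consists of the vertices reachable by repeatedly moving by $\pm k$, namely $\{v_{jk \bmod n} : j \in \mathbb{Z}\}$. This is the cyclic subgroup $\langle k \rangle \le \mathbb{Z}_n$, which has order $n/\gcd(k,n)$.

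\textbf{Step 3: both directions.} If $\gcd(k,n) = 1$, then $\langle k\rangle = \mathbb{Z}_n$. Concretely, by B\'ezout there is $a$ with $ak \equiv 1 \pmod n$, so $v_0$ reaches $v_1$ and hence every vertex. Therefore $G_2^k$ is connected, and the walk $v_0, v_k, v_{2k}, \dots, v_{(n-1)k}, v_0$ is a Hamiltonian cycle. Conversely, if $d = \gcd(k,n) > 1$, then every index in the component of $v_0$ is a multiple of $d$. So $v_1$ is not in that component, and $G_2^k$ splits into $d$ disjoint cycles, each of length $n/d$. Hence $G_2^k$ is not a single cycle.

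\textbf{Main obstacle.} The only delicate point is Step 1. We must check that the neighbors $v_{i+k}$ and $v_{i-k}$ are distinct, and that no other vertex lies at distance $k$. Both facts use $n > 2k$: it rules out the wrap-around distance $n-k$ coinciding with $k$, and it rules out $v_{i+k} = v_{i-k}$, which would make $G_2^k$ a matching. The remaining steps are standard facts about $\mathbb{Z}_n$.
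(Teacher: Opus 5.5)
Your proof is correct, and it is organized differently from the paper's.

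The paper proves each direction separately by contradiction.
\begin{itemize}
\item For $\gcd(k,|V|)=1 \Rightarrow$ cycle, it assumes $G_2^k$ splits into $\ell$ components. It then asserts that these components have equal size and that $v_0,\dots,v_{k-1}$ are divided equally among them. From this it concludes $\ell \mid k$ and $\ell \mid |V|$, so $\ell \le \gcd(k,|V|)=1$.
\item For the converse, it uses connectivity to write $v_1 = v_{dk \bmod |V|}$, i.e.\ $dk - w|V| = 1$, which forces the gcd to be $1$.
\end{itemize}

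You instead identify the component of $v_0$ once, as the subgroup $\langle k\rangle \le \mathbb{Z}_n$ of order $n/\gcd(k,n)$. Both directions then follow immediately, along with the exact decomposition into $\gcd(k,n)$ cycles of length $n/\gcd(k,n)$.

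Your converse (if $\gcd>1$, every index reachable from $v_0$ is a multiple of the gcd) is essentially the contrapositive of the paper's argument. The real difference is in the $\gcd=1$ direction. There you use B\'ezout's identity to reach $v_1$ and then every vertex. This replaces the paper's two structural claims, equal component sizes and the equal splitting of $\{v_0,\dots,v_{k-1}\}$, which the paper asserts without derivation and which are exactly what your coset description justifies.

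You also prove explicitly in Step 1 that the $k$-distant neighbours of $v_i$ are exactly $v_{i\pm k}$ and that they are distinct because $0<2k<n$. The paper dismisses this point as ``easy to see.''

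Overall your version is more self-contained and gives more information. The paper's version avoids group-theoretic language but leans on symmetry claims it does not prove.
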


\begin{proof}
(Necessity) $\Leftarrow$ Suppose for the sake of contradiction that $\gcd(k,|V|) = 1$ but that $G_2^k$ is not a cycle. Given that $|V| > 2k$, it is easy to see that every vertex in $G_2^k$ has degree $2$. Thus, it must be that $G_2^k$ is either isomorphic to $G$ or the disjoint union of smaller cycles. Since $G_2^k$ is not a cycle by our assumption, we have that $G_2^k$ is formed of disjoint cycles. Let $G_2^k$ have $\ell$ components $\{C_1,C_2, \dots C_{\ell}\}$. By Lemma \ref{lem:kadj} where $v_0 := v_i$, each $C_i$ has $v_{i} \in N_{G_2^k}(v_{(k+i)\mod{|V|}})$. It is easy to see that each $C_i$ has an equal number of vertices. In particular, the vertices $A = \{v_0,\dots,v_{k-1}\} \subseteq G$ must be divided equally among the $\ell$ components in order for each $C_i$ to exist. In other words, $k \mod \ell = 0$. See Fig. \ref{fig:modifiedModKP}, which shows an example where $k = 3$ and $\ell = 3$.

Let $n$ be the number of vertices in any $C_i$. Obviously, $|V| = n \ell$. Since $|V|$ and $k$ are divisible by $\ell$, we have that $\gcd(k,|V|) \geq \ell$. If $\ell = 1$, then $G_2^k$ has one connected component, a contradiction. Thus, $\ell \neq 1$, an immediate contradiction to our assumption that $\gcd(k,|V|) = 1$.

(Sufficiency) $\Rightarrow$ Suppose not. Thus, $G_2^k$ is a connected cycle but that $\gcd(k,|V|)$ $ \neq 1$. For all $d > 0$, note that $v_0$ and $v_{dk}$ belong to the only cycle in $G_2^k$. So, $v_1 = v_{dk \mod{|V|}}$ for some $d$. This means that $1 = (dk \mod{|V|})$, or $dk = w|V| + 1$ for some $w$, further simplified to $c = dk - w|V|$. It is trivial that $c = 1$. 

Let $\gcd(k,|V|) = \ell$, where $\ell \neq 1$. Since $k$ and $|V|$ are divisible by $\ell$, we have that $\ell$ is a common factor of both terms in $c$ and therefore $c$ is divisible by $\ell$. However, the fact that $c = 1$ implies that $1$ is divisible by $\ell$. This is only possible when $\ell = gcd(k, |V|) = 1$, a contradiction.  
\end{proof}

\begin{figure}[h]
    \centering
    \begin{tikzpicture}[scale=1]
        \node[circle,fill=ForestGreen,scale=.7] at ({9*360/9+130}:2cm) (n0) {};
        \foreach \n in {3,6}{
            \node[circle,fill=black,scale=.7] at ({(9-\n)*360/9+130}:2cm) (n\n) {};
        }
        \node[circle,fill=blue,scale=.7] at ({(9-1)*360/9+130}:2cm) (n1) {};
        \foreach \n in {4,7}{
            \node[circle,fill=black,scale=.7] at ({(9-\n)*360/9+130}:2cm) (n\n) {};
        }
        \node[circle,fill=red,scale=.7] at ({(9-2)*360/9+130}:2cm) (n2) {};
        \foreach \n in {5,8}{
            \node[circle,fill=black,scale=.7] at ({(9-\n)*360/9+130}:2cm) (n\n) {};
        }
        \foreach \n in {0,...,7}{
            \node at ({(9-\n)*360/9+130}:2.5cm) {$v_{\n}$};
            \draw [thick](n\n) -- (n\the\numexpr\n+1\relax);
        }
        \draw [thick](n0) -- (n8);
        \node at ({(9-8)*360/9+130}:2.5cm) {$v_{8}$};
        \draw [ForestGreen] (n0) -- (n6);
        \draw [ForestGreen] (n6) -- (n3);
        \draw [ForestGreen] (n3) -- (n0);
        \draw [blue] (n1) -- (n7);
        \draw [blue] (n7) -- (n4);
        \draw [blue] (n4) -- (n1);
        \draw [red] (n2) -- (n8);
        \draw [red] (n8) -- (n5);
        \draw [red] (n5) -- (n2);
    \end{tikzpicture}

    \caption{Let the black edges of the figure induce graph $G$. Then, let the colored edges induce $G_2^{k=3} = H$. Note that $H$ is the disjoint union of three triangles. The colored vertices $v_0$, $v_1$, and $v_2$ in $G$ are equally partitioned between the components of $H$.}
    \label{fig:modifiedModKP}
\end{figure}

\begin{corollary}
Let $p \geq 0$ and $G = \langle v_0, v_1, \dots , v_{|V|-1} \rangle$ be a cycle graph having $|V| > 2k$ and $\gcd(k,|V|)=1$. Then, $G^k_{p+1}$ is a cycle and $v_0 \in N_{G_{p+1}^k}(v_{k^p \mod{|V|}})$.
\label{lem:modKP}
\end{corollary}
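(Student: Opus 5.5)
We argue by induction on $p$, proving the stronger statement that $G^k_{p+1}$ is a cycle on $V$ in which, for every index $i$, the vertex $v_i$ is adjacent to $v_{(i + k^p) \bmod |V|}$. Equivalently, $G^k_{p+1}$ is the cycle $\langle u_0, u_1, \dots, u_{|V|-1}\rangle$ with $u_j = v_{j k^p \bmod |V|}$. The claim $v_0 \in N_{G^k_{p+1}}(v_{k^p \bmod |V|})$ is the case $i=0$.

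\textbf{Base case.} For $p=0$ we have $k^0 = 1$. Then $G^k_1 = G$ is a cycle in which $v_i$ is adjacent to $v_{i+1}$, so there is nothing to prove.

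\textbf{Inductive step.} Suppose $H = G^k_{p+1}$ is the cycle $\langle u_0,\dots,u_{|V|-1}\rangle$ with $u_j = v_{jk^p \bmod |V|}$. Since $\gcd(k,|V|)=1$, we also have $\gcd(k^p,|V|)=1$. Hence multiplication by $k^p$ permutes the residues modulo $|V|$, so the $u_j$ are distinct and $H$ is a genuine relabeled cycle on all of $V$. This is consistent with $H$ being a cycle.

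Now $H$ is a cycle on $|V| > 2k$ vertices with $\gcd(k,|V|)=1$, so Lemma \ref{lem:gcd} applies and $D^k(H) = G^k_{p+2}$ is a cycle. By Lemma \ref{lem:kadj}, applied with $u_j$ playing the role of $v_0$ (that is, after re-indexing the cycle to start at $u_j$), $u_j$ is adjacent in $D^k(H)$ to $u_{(j+k) \bmod |V|}$.

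Translating back to the original labels:
\[
u_{j+k} = v_{(j+k)k^p \bmod |V|} = v_{(jk^p + k^{p+1}) \bmod |V|}.
\]
So each $v_i = u_j$ is adjacent to $v_{(i + k^{p+1}) \bmod |V|}$, which completes the induction.

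\textbf{Expected obstacle.} The arithmetic is routine; the only delicate point is bookkeeping. Lemma \ref{lem:kadj} is stated for a cycle indexed from $v_0$, and we must make sure it is applied to the new cycle $H$ with its own cyclic ordering $u_0,\dots,u_{|V|-1}$, not with the original labels. Lemma \ref{lem:kadj} gives adjacency of $u_j$ and $u_{j+k}$ for each $j$. Since every vertex of $D^k(H)$ has degree $2$ and is adjacent to $u_{j\pm k}$, the cyclic ordering of $D^k(H)$ is $w_m = u_{mk \bmod |V|}$. Substituting gives $w_m = v_{mk^{p+1} \bmod |V|}$, which is exactly the form required to continue the induction.
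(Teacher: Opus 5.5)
Your proof is correct and takes the same route as the paper: induction on $p$, with Lemma \ref{lem:kadj} supplying the new adjacency and Lemma \ref{lem:gcd} ensuring each iterate remains a single cycle. Your strengthened invariant, which tracks the whole cyclic order $u_j = v_{jk^p \bmod |V|}$ rather than only the neighbor of $v_0$, is exactly what is needed to legitimately apply Lemma \ref{lem:kadj} to $G^k_{p+1}$. The paper's inductive step leaves this point implicit, relying on rotational symmetry without stating it.
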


\begin{proof}
The statement is certainly true if $p = 0$ because $v_0 \in N_G(v_1)$ and $G$ is assumed to be a cycle (noting that $G = G^k_{1}$).

We claim the statement is true for any $p = i$ where $i > 0$. First, we show the statement is true when $i = 1$. Since $k < |V|$ and by Lemma \ref{lem:kadj} we have that $v_0 \in N_{G^k_2}(v_k)$. Of course, the fact that $\gcd(|V|,k)=1$ and $|V(G)| = |V(G_{2}^k)|$ implies by Lemma \ref{lem:gcd} that $G^k_2$ is a cycle.

Now suppose that $i > 1$. By inductive hypothesis, $v_0 \in N_{G_{i+1}^k}$ $(v_{k^i \mod{|V|}})$ and $G_{i+1}^k$ is a cycle. By Lemma \ref{lem:kadj} where $G := G_{i+1}^k$ we have that $v_0 \in N_{G_{i+2}^k}$ $(v_{k^{i+1} \mod |V|})$.  Of course, the fact that $\gcd(|V(G_{i+1}^k)|,k) = 1$ and $|V(G_{i+1}^k)| = |V(G_{i+2}^k)|$ implies by Lemma \ref{lem:gcd} that $G_{i+2}^k$ is a cycle.
\end{proof}

We are prepared to complete our main result.

\begin{theorem}
Let $k > 1$ and $p > 1$. The cycle graph $C_{k^p+1}$ has weak-periodicity $p$ under $k$-distance.
\label{thm:omniWeak}
\end{theorem}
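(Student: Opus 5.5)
Set $n = k^p+1$ and let $G = C_n = \langle v_0,\dots,v_{n-1}\rangle$. We will track the labeled structure of $G_{t+1}^k$ exactly, and show that it first returns to $G$ (with labels) at $t = p$.

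\textbf{Step 1: hypotheses of Corollary \ref{lem:modKP}.} Since $p>1$, we have $n = k^p+1 > 2k$. Also $\gcd(k,n)=1$, because any common divisor of $k$ and $k^p+1$ divides $1$. So for every $t\ge 0$, $G_{t+1}^k$ is a cycle and $v_0 \in N_{G_{t+1}^k}(v_{k^t \bmod n})$.

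\textbf{Step 2: the whole labeled cycle, not just $v_0$.} Applying Lemma \ref{lem:kadj} with the relabeling $v_0 := v_i$ (the cycle is vertex-transitive under rotation), the argument of Corollary \ref{lem:modKP} gives more than it states. By induction on $t$,
\[
E(G_{t+1}^k) = \bigl\{ \{v_i, v_{(i+k^t) \bmod n}\} : 0\le i<n \bigr\}.
\]
The inductive step works as follows. If $G_{t+1}^k$ is the cycle $v_0, v_{m}, v_{2m}, \dots$ with $m = k^t \bmod n$, then its $k$-distance graph joins each $v_i$ to the vertex $k$ steps along that cycle, which is $v_{i+km}$. This holds because $n>2k$: the short arc has length $k$ and the long arc has length $n-k>k$. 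Hence $G_{t+1}^k$ is labeled-equal to the circulant $C_n(k^t \bmod n)$, with generator taken up to sign.

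\textbf{Step 3: return time.} Two such circulants $C_n(a)$ and $C_n(b)$ are labeled-identical iff $a \equiv \pm b \pmod n$. So $G_{t+1}^k \equiv G$ iff $k^t \equiv \pm 1 \pmod n$. Since $k^p \equiv -1 \pmod{n}$, we get $G_{p+1}^k \equiv G$, so $\varrho^k(G) \le p$.

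For minimality we must rule out $1\le t<p$. In that range $1 < k^t \le k^{p-1} < n-1$, so $k^t \not\equiv \pm1$, and $G_{t+1}^k \not\equiv G$. Hence the first labeled return is at index $f = p+1$, giving $\varrho^k(C_{k^p+1}) = p$.

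\textbf{Main obstacle.} The delicate point is Step 2. Corollary \ref{lem:modKP} only records one neighbor of $v_0$, whereas we need the entire labeled edge set. This is where I expect to spend the care: I will phrase the induction so that the $k$-distance graph of a cycle traversed with step $m$ is the cycle traversed with step $km$, reusing Lemma \ref{lem:kadj} at every vertex. The "iff $\pm$" characterization of labeled equality in Step 3 also needs a short justification: compare the two neighbors of $v_0$.
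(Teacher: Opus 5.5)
Your proposal is correct and follows essentially the same route as the paper: check $n>2k$ and $\gcd(k,n)=1$, show that $v_i$ is adjacent to $v_{i+k^t}$ in $G_{t+1}^k$ for every $i$ (the paper gets this by applying Corollary~\ref{lem:modKP} to each rotated labeling), use $k^p\equiv -1 \pmod n$ for the labeled return, and use $1<k^\ell<n-1$ at $v_0$ for minimality. Your explicit induction on the full edge set (the circulant $C_n(k^t \bmod n)$) justifies more cleanly what Corollary~\ref{lem:modKP} uses only implicitly: its inductive step applies Lemma~\ref{lem:kadj} to $G_{i+1}^k$, and that requires knowing the whole labeled cycle, not just one neighbor of $v_0$.
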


\begin{proof}
Formally, we will prove there exists an $H$ such that $\varrho^k(H) = p$. Let cycle graph $G = (V,E)$ have $|V| = k^p + 1$.

First, we will show that all cycles with $|V| = k^p+1$ satisfy the assumptions of Lemma \ref{lem:gcd}. Clearly, we see that $2k < (k^p + 1)$. Moreover, we have that $\gcd(k,|V|) = 1$ because there are no common divisors between $k$ and $k^p+1$ other than $1$. In particular, $G_{p+1}^k$ is a connected cycle. By Corollary \ref{lem:modKP} we have $v_0 \in N_{G_{p+1}^k}(v_{k^p \mod |V|})$. This is equivalent to saying that $v_0 \in N_{G_{p+1}^k}(v_{k^p})$ because $|V| > k^p$. 

We claim that $G_1^k \equiv G_{p+1}^k$. For each $0 < i \leq |V|$, by Corollary \ref{lem:modKP} where $G := \{v_i v_{i+1} \dots  $ $v_{|V|-1} v_0 v_1  \dots$ $v_{i-1}\}$, $k := k$, and $p := p$, we have $v_i \in N_{G_{p+1}^k}(v_{(i+k^{p})\mod |V|})$. By construction, we have proven our claim.

Now we will prove that $G_{p+1}^k$ is the first recurrence of a graph that is label isomorphic to $G_1^k$ in the sequence $\langle G_1^k, G_2^k, $ $ \dots \rangle$. For the sake of contradiction, suppose that there is a graph $G_{\ell+1}^k$ such that $G_{\ell+1}^k \equiv G_1^k$ and $0 < \ell < p$. By definition, $v_0 \in N_{G_{\ell+1}^k}(v_1,v_{|V|-1})$. By Corollary \ref{lem:modKP} where $G:=G$, $k:=k$, and $p := \ell$, we have $v_0 \in N_{G_{\ell+1}^k}(v_{k^{\ell} \mod{|V|}})$ and therefore it is certain that $(k^\ell \mod |V|) = 1$ or $(k^\ell \mod |V|) = |V|-1$. For either of these equalities to hold, it is necessary that $k^\ell \geq |V|-1$. So, we have $k^\ell \geq k^p+1-1$ (or more simply we have $k^\ell \geq k^p$), a contradiction. We set $H := G_{p+1}^k$ and we are done.
\end{proof}

\section{Strong Periodicity}
\label{sect:strong}

In order to better study the structure of $k$-distance graphs, we consider a stronger form of a periodicity. Many authors have studied the (strong) periodicity of $k$-distance graphs. More recently, Erickson et al$.$ \cite{erickson} showed that only the disjoint union of any number of $C_7$'s and $C_9$'s have $\rho^2(G) = 3$. This result does not exclude the possibility that a $\rho^k(G)$ of any value exists for all $k > 2$. In the remaining section, we will show that this statement is true. Lastly, we will provide evidence that the periodicity number of a $2$-distance graph is upper bounded by a function of its number of vertices.

In a sense, a (strong) period is generally more difficult to find than a weak period because isomorphism must be computed in the search. Regardless, every weak period contains a (strong) period. 

\begin{proposition}
A simple graph $G$ is (strong) periodic if and only if $G$ is weak periodic.
\label{prop:periodIFF}
\end{proposition}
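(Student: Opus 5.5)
The plan is to prove both directions directly from Definition \ref{def:periodicity}, using two facts. First, $D^k$ is a function on labeled graphs over the fixed vertex set $V$: if $X \equiv Y$ then $D^k(X) \equiv D^k(Y)$. Second, $D^k$ commutes with relabelings: if $\sigma$ is a bijection of $V$, then $D^k(\sigma(X)) = \sigma(D^k(X))$, because distances are preserved under isomorphism. Consequently, if $X \cong Y$ via $\sigma$, then $D^k(X) \cong D^k(Y)$ via the same $\sigma$.

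\textbf{Weak periodic implies strong periodic.} Suppose $G_f^k \equiv G_1^k$ for some $f>1$. Labeled isomorphism implies isomorphism, so $G_f^k \cong G_1^k$. Hence the set of indices $f>1$ with $G_f^k \cong G_1^k$ is nonempty, and its least element gives $\rho^k(G) \ge 1$. This also shows $\rho^k(G) \le \varrho^k(G)$.

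\textbf{Strong periodic implies weak periodic.} Suppose $G_f^k \cong G_1^k$ for some $f>1$, and let $\sigma$ be an isomorphism from $G_1^k$ to $G_f^k$. Set $m=f-1$. By the commuting property and induction on $t$, $\sigma$ is also an isomorphism from $G_t^k$ to $G_{t+m}^k$ for every $t \ge 1$. Composing these gives $G_{1+jm}^k = \sigma^j(G_1^k)$ as labeled graphs, for every $j \ge 0$. The permutation $\sigma$ has finite order $r$ in the symmetric group on $V$, so $\sigma^r$ is the identity. Therefore $G_{1+rm}^k \equiv G_1^k$ with $1+rm>1$. So a weak recurrence exists and $G$ is weak periodic, with $\varrho^k(G)$ at most $r\rho^k(G)$. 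An alternative route uses pigeonhole: there are only finitely many labeled graphs on $V$, so the sequence is eventually periodic in $\equiv$. That alone does not guarantee a return to $G_1^k$ itself, however, so the $\sigma$-iteration argument is the one I will use.

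\textbf{Membership in a period.} The statement speaks of $G$ \emph{being} periodic, that is, of belonging to a period, not only of being its first term. I will therefore note that any $G_t^k$ inside the period starting at $G_1^k$ also recurs. In the weak case, $G_{t+\varrho}^k = (D^k)^{t-1}(G_{1+\varrho}^k) \equiv (D^k)^{t-1}(G_1^k) = G_t^k$, by the first fact. The strong case is handled the same way using the second fact. So being in a strong period and being in a weak period are each equivalent to the corresponding condition with $G$ taken as the first term, and the two directions above finish the proof.

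\textbf{Main obstacle.} The key step is verifying that a single isomorphism $\sigma$ transports the entire tail of the sequence, i.e.\ $D^k(\sigma(X)) = \sigma(D^k(X))$. I will check this directly: $d_{\sigma(X)}(\sigma u,\sigma v) = d_X(u,v)$, so $\{\sigma u,\sigma v\}$ is an edge of $D^k(\sigma(X))$ if and only if $\{u,v\}$ is an edge of $D^k(X)$. Once this is in place, the finite order of $\sigma$ gives the rest.
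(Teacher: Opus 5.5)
Your proof is correct. The direction weak $\Rightarrow$ strong, where $\equiv$ implies $\cong$, is exactly the paper's. The direction strong $\Rightarrow$ weak follows a more careful route.

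\textbf{The paper's argument.} The paper argues by pigeonhole. It asserts that $G \cong G^k_{tu+1}$ for every $t>0$ and notes that a finite vertex set admits only finitely many relabelings. It then concludes that the subsequence $G^k_{tu+1}$ ``must eventually repeat,'' so $G$ is weak periodic.

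\textbf{Your argument.} You prove the equivariance $D^k(\sigma(X)) = \sigma(D^k(X))$ and use it to get the exact identity $G^k_{1+jm} = \sigma^j(G^k_1)$. You then finish with the finite order of $\sigma$.

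\textbf{What your route adds.} It supplies several things the paper's sketch leaves implicit.
\begin{itemize}
\item It justifies the paper's ``certainly true'' claim $G \cong G^k_{tu+1}$.
\item It guarantees a return to $G^k_1$ itself. Pigeonhole alone only gives a repetition $G^k_{t_1u+1} \equiv G^k_{t_2u+1}$ with $t_1<t_2$, which is not enough because $D^k$ is not injective (for instance $D^k(K_n)=D^k(\overline{K_n})$). Pulling such a repetition back to $G$ requires precisely your relation, via $\sigma^{t_1}(G)=\sigma^{t_2}(G) \Rightarrow G=\sigma^{t_2-t_1}(G)$.
\item It gives the explicit bound $\varrho^k(G)\le r\,\rho^k(G)$, taking $m=\rho^k(G)$ and $r$ the order of $\sigma$.
\item Your remark on membership in a period handles the fact that Definition~\ref{def:periodicity} calls every graph in a period periodic, not just its first term. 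The paper's proof does not address this.
\end{itemize}

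The paper's version is shorter, but as written it is only a sketch of the argument you give.
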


\begin{proof}
(Sufficiency) $\Rightarrow$ By Definition \ref{def:periodicity}, we have that, if $G$ is periodic, then an integer $u$ and a graph $G_{u+1}^k$ exist such that $G \cong G_{u+1}^k$. Moreover, it is certainly true that $G \cong G_{tu+1}^k$ for any $t > 0$. Although there are an infinite number of $G_{tu+1}^k$'s that are isomorphic to $G$ in this sequence, there are only a finite number of ways to permute vertex labels because $G$ is finite. Therefore, this subsequence must eventually repeat. Thus, $G$ is weak periodic, i.e. $\varrho^k(G)>0$, so we are done.

(Necessity) $\Leftarrow$ By Definition \ref{def:periodicity}, we have that if $G$ is weak periodic then for some $u > 0$ there exists a graph $G_{u+1}^k$ such that $G \equiv G_{u+1}^k$. Immediately we have $G \cong G_{u+1}^k$, so we are done.
\end{proof}

\begin{corollary}
For a simple, periodic graph $G$, we have that $\varrho^k(G) \mod \rho^k(G) = 0$.
\end{corollary}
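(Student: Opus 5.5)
Write $r = \rho^k(G)$ and $w = \varrho^k(G)$; both are positive because $G$ is periodic (Proposition \ref{prop:periodIFF}). The plan is to show that the set of indices $t$ with $G_{t+1}^k \cong G$ is exactly the set of multiples of $r$. Since $G_{w+1}^k \equiv G$ gives in particular $G_{w+1}^k \cong G$, it will follow that $r \mid w$, i.e.\ $w \bmod r = 0$.

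\textbf{Step 1: $D^k$ respects isomorphism.} If $X \cong Y$ via a bijection $\phi$, then $\phi$ preserves distances, so it is also an isomorphism $D^k(X) \cong D^k(Y)$. By induction on $s$, if $G_{a}^k \cong G_{b}^k$ then $G_{a+s}^k \cong G_{b+s}^k$ for every $s \geq 0$.

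\textbf{Step 2: multiples of $r$ return to $G$.} Apply Step 1 to $G_{r+1}^k \cong G_1^k$ and iterate. This gives $G_{tr+1}^k \cong G$ for all $t \geq 0$, and more generally the isomorphism type of $G_{m}^k$ depends only on $(m-1) \bmod r$.

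\textbf{Step 3: every return is a multiple of $r$.} Suppose $G_{t+1}^k \cong G$ with $t > 0$. Write $t = qr + s$ with $0 \leq s < r$. By Step 2, $G_{t+1}^k \cong G_{s+1}^k$, so $G_{s+1}^k \cong G$. If $s > 0$, this contradicts the minimality of $r$, since $G_{r+1}^k$ is by Definition \ref{def:periodicity} the first graph after $G$ isomorphic to $G$. Hence $s = 0$ and $r \mid t$.

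\textbf{Conclusion.} Take $t = w$ in Step 3 to get $r \mid w$, which is the claim.

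The only real subtlety is Step 3. We need the minimality of $r$ exactly as written in Definition \ref{def:periodicity}: it is the first $f>1$, counted from $G$ itself, with $G_f^k \cong G$. We also need the fact that once the sequence repeats up to isomorphism, it stays periodic up to isomorphism. Step 1 supplies this, since the operator is applied to the graph itself and not to its labels, so we do not need to track labels at all. Everything else is routine division with remainder.
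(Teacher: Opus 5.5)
Your proof is correct and follows the same route as the paper: a labeled return at $\varrho^k(G)$ is in particular an isomorphic return, and isomorphic returns occur only at multiples of $\rho^k(G)$. Your Steps 1--3 ($D^k$ commutes with isomorphisms, then division with remainder and minimality of $\rho^k(G)$) supply the justification for that second fact, which the paper simply asserts as trivial.
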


\begin{proof}
Let $\mathcal{G} = \langle G =  G_1^k,G_2^k,G_3^k,...\rangle$. By Definition \ref{def:periodicity}, we have that if $G$ is periodic, then an integer $u$ and a graph $G_{u+1}^k$ exist such that $G \cong G_{u+1}^k$ and $\rho^k(G) = u$. It follows that $G \cong G_{tu+1}^k$ for any $t > 0$. For any $q$ $\neq tu+1$, it is trivial that $G \not\equiv G^k_{q}$ (in other words, $G$ may not be label isomorphic to any $G^k_i \in \mathcal G$ unless $i-1$ is a multiple of $u$). But, since $G$ also belongs to a weak period by Proposition \ref{prop:periodIFF}, for some $t' > 0$ we have that $G \equiv G_{t'u+1}^k$. Because $(t'u \mod u) = 0$, we are done.
\end{proof}

We define a class of graphs that will be pivotal to the proof of our theorem.

\begin{definition}
An \emph{ear graph} $G = (V,E)$ is a graph that contains an induced cycle of size $|V| - 2$ such that two vertices $a,b$ belonging to the cycle have false twins. Let $n = |V| - 2$ and $d = d_G(a,b)$. To be more specific, we may say that $G$ is an $(n,d)$-ear. 
\label{def:ear}
\end{definition}

Fig. \ref{fig:earGraph} shows an example of an ear graph. The next statement is true in part due to the observation that false twins remain false twins under any $k$-distance operator where $k > 2$.

\begin{figure}[h]
    \centering
    \begin{tikzpicture}[scale=.8]
        \node[circle,fill=black,scale=.7] at ({13*360/13+146}:2.5cm) (Lear) {};
        \node[circle,fill=black,scale=.7] at ({9*360/13+146}:2.5cm) (Rear) {};

        \node at (1.2cm,1.1cm) {$b$};
        \node at (-1.2cm,1.1cm) {$a$};
        \node at (2cm,1.9cm) 
        {$b'$};
        \node at (-2cm,1.9cm) {$a'$};
        
        \foreach \n in {0,1,2,3,4,5,6,7,8,9,10,11,12}{
            \node[circle,fill=black,scale=.7] at ({(13-\n)*360/13+146}:2cm) (n\n) {};
        }
        \foreach \n in {0,...,11}{
            \draw (n\n) -- (n\the\numexpr\n+1\relax);
        }
        \draw (n0) -- (n12);
        \draw (Lear) -- (n12);
        \draw (Lear) -- (n1);
        \draw (Rear) -- (n3);
        \draw (Rear) -- (n5);
             
    \end{tikzpicture}
    ~~~~~~~~~~
    \begin{tikzpicture}[scale=.9]
        \node[circle,fill=black,scale=.7] at ({12*360/13+132}:2.5cm) (Lear) {};
        \node[circle,fill=black,scale=.7] at ({11*360/13+132}:2.5cm) (Rear) {};

        \node at (0.4cm,1.5cm) {$b$};
        \node at (-0.4cm,1.45cm) {$a$};
        \node at (1cm,2.5cm) {$b'$};
        \node at (-1cm,2.5cm) {$a'$};
        
        \foreach \n in {0,1,2,3,4,5,6,7,8,9,10,11,12}{
            \node[circle,fill=black,scale=.7] at ({(13-\n)*360/13+132}:2cm) (n\n) {};
        }
        \foreach \n in {0,...,11}{
            \draw (n\n) -- (n\the\numexpr\n+1\relax);
        }
        \draw (n0) -- (n12);
        \draw (Lear) -- (n0);
        \draw (Lear) -- (n2);
        \draw (Rear) -- (n1);
        \draw (Rear) -- (n3);
        \draw (Lear) -- (Rear);
             
    \end{tikzpicture}
    
    \caption{The graph on the left is a $(13,4)$-ear graph while the graph on the right is a $(13,1)$-ear graph. $\{a,a'\}$ and $\{b,b'\}$ are false twins.}
    \label{fig:earGraph}
\end{figure}

\begin{lemma}
Let $k > 2$ and let $G$ be an $(n,d)$-ear graph for any arbitrary $n,d$. Let $C_n$ be a largest induced cycle in $G$. If $\rho^k(C_n) = 1$ then $G^k_2$ is an ear graph.
\label{lem:earToEar}
\end{lemma}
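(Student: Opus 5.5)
The plan is to compute distances in $G$ from those in the cycle $C_n$. Let $C_n = \langle v_0,\dots,v_{n-1}\rangle$. Let $a',b'$ be the false twins of $a,b\in V(C_n)$, so $N_G(a')=N_G(a)$ and $N_G(b')=N_G(b)$. The first step is a distance-preservation claim: adding the ears does not change distances.

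\emph{Distances on the cycle.} For $u,v\in V(C_n)$ we have $d_G(u,v)=d_{C_n}(u,v)$. Any path through $a'$ enters and leaves via two neighbors of $a$ on the cycle. Replacing $a'$ by $a$ (and likewise $b'$ by $b$) gives a walk in $C_n$ of the same length, so no shortcut is created.

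\emph{Distances involving an ear.} For $x\neq a,a'$ we get $d_G(a',x)=d_G(a,x)$, since the twins have identical neighborhoods. Finally $d_G(a,a')=2$, because they share a neighbor and are not adjacent. The same holds for $b,b'$, and $d_G(a',b')=d_G(a,b)$ (with care when $d=1$ or $a,b$ share neighbors, since the twin-substitution still works).

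From this we read off the structure of $G_2^k$. Restricted to $V(C_n)$, the graph $G_2^k$ equals $D^k(C_n)$. By the hypothesis $\rho^k(C_n)=1$, we have $D^k(C_n)\cong C_n$, so $D^k(C_n)$ is a cycle on $V(C_n)$; it remains induced in $G_2^k$ because we take all pairs of cycle vertices. Next, since $k>2$, the pair $a,a'$ is at distance $2\neq k$, so they are non-adjacent in $G_2^k$. Moreover, for every other $x$ we have $d_G(a,x)=d_G(a',x)$, so $a$ and $a'$ have the same neighborhoods in $G_2^k$. Hence $a,a'$ are false twins in $G_2^k$, and likewise $b,b'$; this is the remark that false twins are preserved by $D^k$ for $k>2$. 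Since $a,b$ lie on the induced cycle $D^k(C_n)$ of size $|V|-2$, it follows that $G_2^k$ is an ear graph, of type $(n,d')$ with $d'=d_{D^k(C_n)}(a,b)$.

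The main obstacle is the distance-preservation claim. Two degenerate situations need care: when $d$ is small ($a,b$ adjacent, as in the $(13,1)$-ear, where $a'$ and $b'$ are adjacent), and when $n$ is small enough that $D^k(C_n)$ could be empty or disconnected. The latter is ruled out by $\rho^k(C_n)=1$, which forces $D^k(C_n)\cong C_n$. For the former, I would handle distances between the twin vertices themselves case by case: $a'$--$b'$, $a'$--$b$, and $a$--$b'$ are all equal to $d_G(a,b)$ by substitution, provided $\{a,a'\}\cap\{b,b'\}=\emptyset$.
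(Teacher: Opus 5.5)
Your proposal is correct and follows essentially the same route as the paper: the ears create no shortcuts between cycle vertices, so $G_2^k$ restricted to $V(C_n)$ is $D^k(C_n)\cong C_n$ (this is where $\rho^k(C_n)=1$ is used), and since $d_G(a,a')=2\neq k$ while the twins are equidistant from every other vertex, $a,a'$ and $b,b'$ remain false twins in $G_2^k$. Your twin-substitution argument (replacing $a'$ by $a$ and $b'$ by $b$ simultaneously, which also covers the $d=1$ case where $a'$ is adjacent to $b'$) makes explicit the distance-preservation step that the paper only calls ``quick to verify''.
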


\begin{proof}
Let $a',b' \in (G \setminus C_n)$ be the false twins of $a,b \in C_n$. It is quick to verify that $G_2^k$ contains a graph that is isomorphic to $C_n$ because every shortest path in $C_n$ is a shortest path in $G$. Since $N_{G}(a) = N_{G}(a')$, the set of vertices that are $k$-distant from $a$ are also $k$-distant from $a'$. Thus, $N_{G^k_2}(a) = N_{G^k_2}(a')$. In fact, any false twins (\emph{viz.} $a,a'$) remain false twins if and only if $k > 2$. (When $k = 2$, false twins become true twins). Similarly, $N_{G^k_2}(b) = N_{G^k_2}(b')$. Thus, $G_2^k$ satisfies the description of an ear graph by Definition \ref{def:ear}.
\end{proof}

\begin{figure}
    \centering
    \includegraphics[scale=.23]{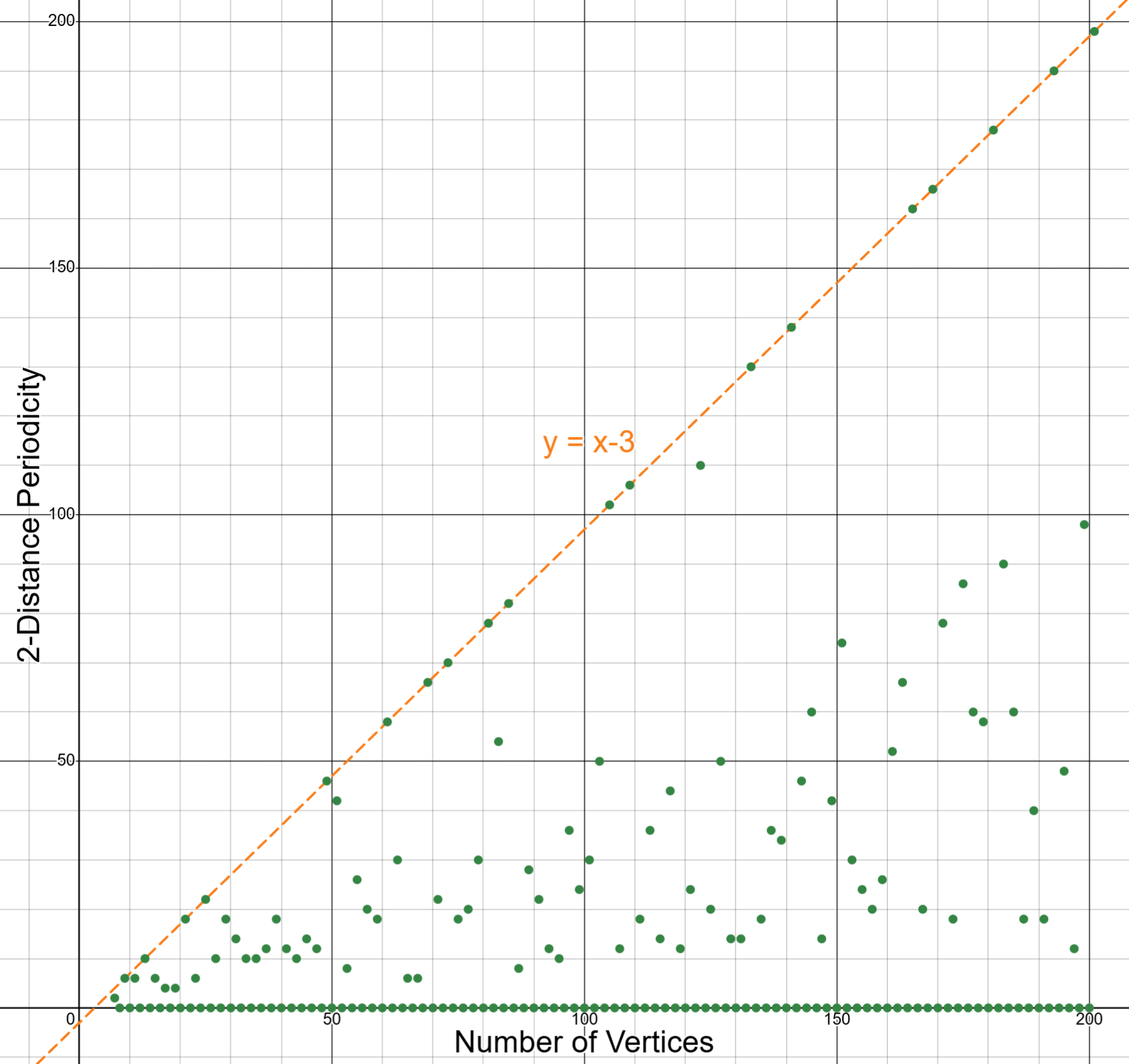}
    \caption{In the above plot, the green points represent the value of $\rho^2(G)$ where $G = (V,E)$ is an $(n,2)$-ear graph for each $n \leq 200$. Note that $\rho^2(G) \leq |V| - 3$ is sharp for only some values of $|V|$.}
    \label{fig:Desmos}
\end{figure}

Next, we will show that the class of ears in which the two cycle vertices with false twins are adjacent has useful properties.

\begin{lemma}
    Given $G = (V,E)$ and $n = |V| - 2$, let $G$ be an $(n,1)$-ear. Let $C$ be a largest induced cycle in $G$ such that $\rho^k(C) = 1$ where $k > 2$. Note that $C$ has $n$ vertices. We have that $G \cong G_q^k$ if and only if $C \equiv C_q^k$ for any $q > 1$.
    \label{lem:earStrongIsWeak}
\end{lemma}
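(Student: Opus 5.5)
We track the labelled sequence $G_1^k, G_2^k,\dots$ and read off its structure from the labelled cycle sequence $C_1^k, C_2^k,\dots$. Write $C=\langle v_0,\dots,v_{n-1}\rangle$ with $a=v_0$, $b=v_1$ (since $d=1$), and let $a',b'$ be their false twins, so $N_G(a')=\{v_{n-1},v_1\}$ and $N_G(b')=\{v_0,v_2\}$. The induced subgraph on $\{a',b'\}\cup(V(C)\setminus\{a,b\})$ is again an $n$-cycle, and $G$ is obtained from $C$ by adding twins of two \emph{adjacent} cycle vertices.

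\textbf{Step 1: the induction hypothesis.} We show by induction on $q$ that $G_q^k$ is obtained from $C_q^k$ by adding false twins $a'$ of $a$ and $b'$ of $b$. The base case is the definition of $G$. For the inductive step we argue as in the proof of Lemma~\ref{lem:earToEar}. First, distances between cycle vertices in $G_q^k$ agree with those in $C_q^k$, because a twin can always be replaced on a path by its partner. Hence $G_{q+1}^k$ restricted to $V(C)$ is exactly $C_{q+1}^k$. Second, $k>2$ keeps $a,a'$ and $b,b'$ false twins. The one point needing care is whether $d(a,a')=k$ or $d(a',b')=k$ can occur. Twins are at distance $2<k$, and $d(a',b')=d(a,b)$, so these cases behave consistently. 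The condition $\rho^k(C)=1$, together with $C_q^k$ being a cycle as in Corollary~\ref{lem:modKP}, ensures that the degrees stay $2$ and that the structure remains an ear over $C_{q+1}^k$. In the ear $G_q^k$ the twinned vertices are $a$ and $b$, and their distance is $d_{C_q^k}(a,b)$, which need not be $1$.

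\textbf{Step 2: the equivalence.} By Step~1, $G_q^k$ is an $(n,d_q)$-ear with $d_q=d_{C_q^k}(a,b)$. If $C\equiv C_q^k$, then $d_q=1$ and the twins are attached to the same labelled vertices, so $G_q^k\equiv G$, and in particular $G\cong G_q^k$. Conversely, suppose $G\cong G_q^k$. An isomorphism preserves the multiset of degrees and the twin structure. From this we recover the ear parameter: the twin pairs must map to twin pairs, so $d_q=1$. It then remains to pass from $d_q=1$ to the labelled equality $C\equiv C_q^k$.

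\textbf{Main obstacle.} The hard step is this last upgrade from isomorphism to labelled isomorphism. For a bare cycle, $C\cong C_q^k$ always holds, so $d_q=1$ says only that $a$ and $b$ are adjacent in $C_q^k$. By Corollary~\ref{lem:modKP}, adjacency in $C_q^k$ is $v_i\sim v_{i\pm k^{q-1}}$, so $d_q=1$ means $k^{q-1}\equiv\pm1 \pmod n$. Under that congruence the map $v_i\mapsto v_{ik^{q-1}}$ is $\pm$ the identity on indices, so $C_q^k\equiv C$, which completes the converse. I would verify carefully that the hypothesis $\rho^k(C)=1$ is exactly what licenses the use of Corollary~\ref{lem:modKP}, namely $n>2k$ and $\gcd(k,n)=1$, and I would treat that as the delicate point.
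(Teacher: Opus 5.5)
Your proposal is correct and follows essentially the same route as the paper: $G_q^k$ is $C_q^k$ with false twins still attached to $v_0,v_1$, the isomorphism $G\cong G_q^k$ forces $v_0\sim v_1$ in $C_q^k$ (the edge count, $n+5$ versus $n+4$, gives this directly), and cyclic symmetry then upgrades this to $C\equiv C_q^k$. You do that last step through Corollary~\ref{lem:modKP}, whose hypotheses $n>2k$ and $\gcd(k,n)=1$ are indeed equivalent to $\rho^k(C)=1$ (when $n\le 2k$, $D^k(C_n)$ is edgeless or a perfect matching), whereas the paper uses rotation invariance of each $C_t^k$ directly. One small slip: because $a\sim b$, false-twinness forces $a'b'\in E(G)$, so $N_G(a')=\{v_{n-1},v_1,b'\}$ and $N_G(b')=\{v_0,v_2,a'\}$, which is what your claim that $\{a',b'\}\cup(V(C)\setminus\{a,b\})$ induces an $n$-cycle needs.
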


\begin{proof}
    (Sufficiency) $\Rightarrow$ Let $C = \langle v_0,v_1,\dots,v_{n-1}\rangle$. W.l.o.g., let $v_0$ and $v_1$ have false twins $a$ and $b$, respectively, in $G$. For any $f$, we have that $G_f^k$ is an ear graph by Lemma \ref{lem:earToEar} where $G := G_{f-1}^k$, $n := n$, and $d := 1$. In addition, the fact that there are only two sets of false twins in $G$ (and that they remain false twins in any $G_f^k$) means that $a \in N_{G^k_q}(b)$ and $v_0 \in N_{G^k_q}(v_1)$.
    
    The existence of $\{a,b\} \subseteq G$ does not shorten the distances between any other pairs in $C$ because $C$ is a simple cycle. Let $0 \leq j < i < n$ and $0 \leq j',i' < n$. For every $t \leq q$ if any vertex $v_i \in N_{G_t^k}(v_j)$ then $v_{i'} \in N_{G_t^k}(v_{j'})$ provided that $i - j = i'- j'$. In particular, this is true when $t = q$. This fact, combined with $v_0 \in N_{G^k_q}(v_1)$, shows that $C \equiv C^k_q$.

    (Necessity) $\Leftarrow$ Let cycle $C = \langle v_0,v_1,\dots,v_{n-1}\rangle$ where $v_0$ and $v_1$ have false twins $a$ and $b$, respectively, in $G$. Since it is trivial that false twins remain false twins under the $k$-distance operator (where $k > 2$), we have that $N_G(a) = N_{G^k_q}(a)$ and $N_G(b) = N_{G^k_q}(b)$. This fact, combined with the assumption that $C \equiv C^k_q$ and thus $C \cong C^k_q$, gives $G \cong G^k_q$.
\end{proof}

We are prepared to complete the main result of this section.

\begin{figure}
    \centering
    \includegraphics[trim=3.25cm 5.2cm 3.25cm 3cm, clip,width=\linewidth]{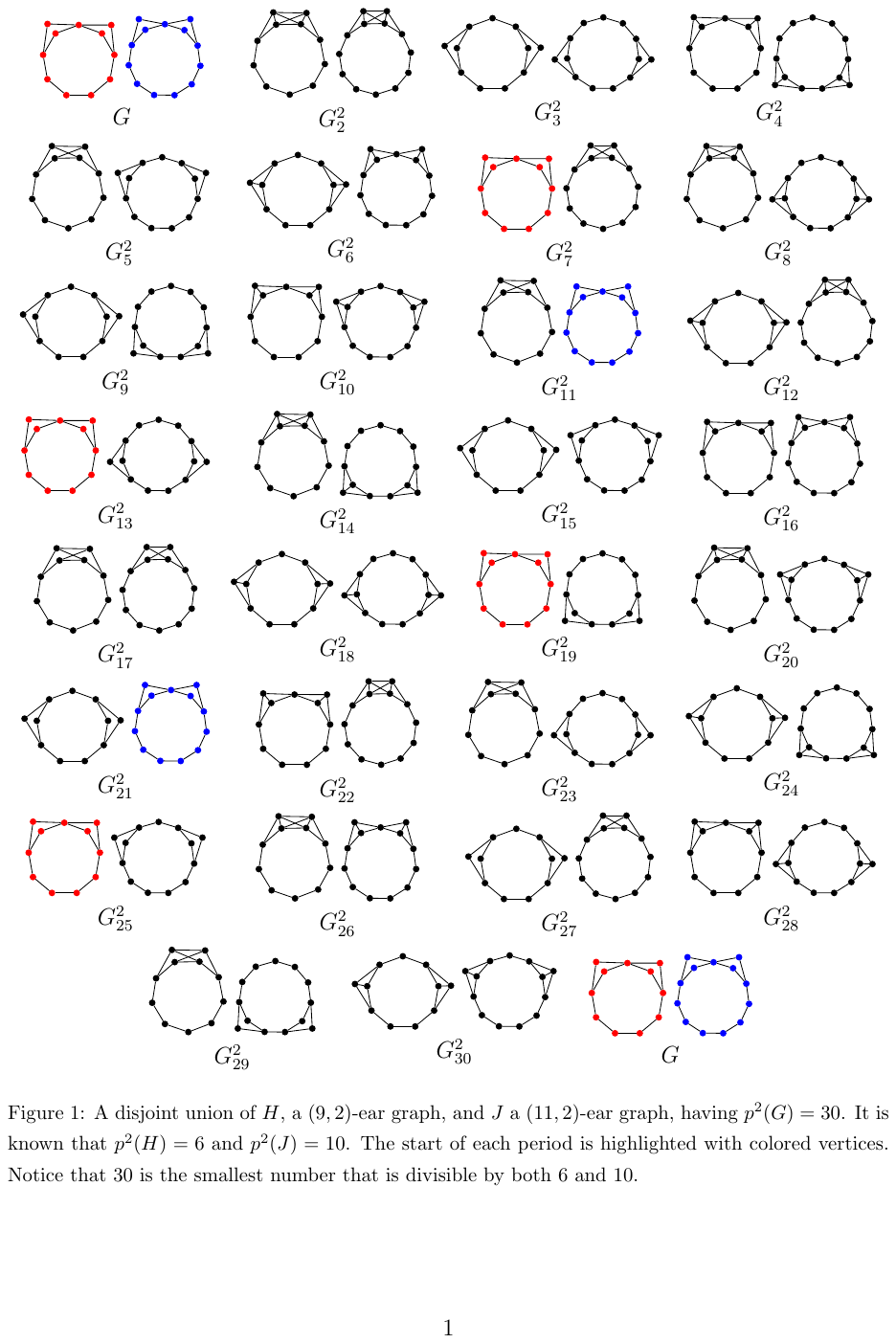}
    \caption{Let $H$ be a $(9,2)$-ear graph and let $J$ be an $(11,2)$-ear graph. Note that $\rho^2(H)=6$ and $\rho^2(J)=10$. Let $G$ be a disjoint union of $H$ and $J$. The above diagram shows that $\rho^2(G) = 30$. For clarity, the red graphs are isomorphic to $H$, thus denoting the start of a strong period of $H$. Similarly, the blue graphs are isomorphic to $J$.}
    \label{fig:disconnectCounter}
\end{figure}

\begin{theorem}
Let $k > 2$ and $p > 1$. A $(k^p+1,1)$-ear has (strong) periodicity $p$ under $k$-distance.\label{thm:StrongAny}
\end{theorem}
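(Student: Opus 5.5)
The plan is to reduce the strong periodicity of the ear graph to the weak periodicity of its underlying cycle, and then invoke Theorem \ref{thm:omniWeak}. Let $G$ be a $(k^p+1,1)$-ear with induced cycle $C=\langle v_0,\dots,v_{n-1}\rangle$, where $n=k^p+1$. Let $a,b$ be the false twins of the adjacent cycle vertices $v_0,v_1$.

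\textbf{Step 1: the cycle returns to itself after exactly $p$ steps.} Since $k>2$ and $p>1$, we have $n=k^p+1>2k$. Since $\gcd(k,k^p+1)=1$, Lemma \ref{lem:gcd} and Corollary \ref{lem:modKP} show that every iterate $C^k_{t+1}$ is a Hamiltonian cycle on the same labelled vertex set, with $v_0$ adjacent to $v_{k^t \bmod n}$. Theorem \ref{thm:omniWeak} applied to $C=C_{k^p+1}$ then gives $\varrho^k(C)=p$. That is, $C\equiv C^k_{p+1}$, and $C\not\equiv C^k_{\ell+1}$ for $0<\ell<p$.

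\textbf{Step 2: transfer to $G$ via the ear lemmas.} Lemma \ref{lem:earToEar} and Lemma \ref{lem:earStrongIsWeak} carry the hypothesis ``$\rho^k(C)=1$''. I will read this as the fact that each $D^k$ step on $C$ (with $\gcd(k,n)=1$ and $n>2k$) again yields an $n$-cycle. This is what those proofs actually use, and it is supplied by Lemma \ref{lem:gcd}. Under that reading, Lemma \ref{lem:earToEar} shows inductively that every $G^k_t$ is an ear graph. The key fact is that false twins stay false twins for $k>2$. Then Lemma \ref{lem:earStrongIsWeak} with $q=p+1$ converts $C\equiv C^k_{p+1}$ into $G\cong G^k_{p+1}$, so $\rho^k(G)\le p$.

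\textbf{Step 3: minimality.} Suppose $G\cong G^k_{\ell+1}$ for some $0<\ell<p$. The converse direction of Lemma \ref{lem:earStrongIsWeak} gives $C\equiv C^k_{\ell+1}$. This contradicts the minimality in Theorem \ref{thm:omniWeak}, whose proof shows directly that $k^\ell \bmod n\notin\{1,n-1\}$ for $\ell<p$. Hence $\rho^k(G)=p$.

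\textbf{Main obstacle.} The delicate point is Step 3, specifically the direction ``$G\cong G^k_q \Rightarrow C\equiv C^k_q$''. An abstract isomorphism need not preserve labels, so it must be argued carefully. I would argue it structurally. For $k>2$ each $G^k_t$ is an ear graph whose only false-twin pairs are $\{v_0,a\}$ and $\{v_1,b\}$ (labels fixed for all $t$). These pairs are adjacent in $G^k_t$ exactly when the ear has $d=1$ there. The ``$d$'' of $G^k_t$ is the distance between $v_0$ and $v_1$ in the cycle $C^k_t$. Along that cycle, $v_1$ sits $j$ steps from $v_0$, where $j\equiv \pm k^{-(t-1)}\pmod n$, since $v_0\sim v_{k^{t-1}}$ and all adjacencies are translation-invariant. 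An isomorphism $G\cong G^k_q$ must map false-twin pairs to false-twin pairs, so it forces this distance to be $1$, i.e.\ $k^{q-1}\equiv\pm1\pmod n$. For $n=k^p+1$ we have $k^p\equiv -1$, so $k$ has multiplicative order $2p$. Hence $k^{q-1}\equiv\pm1$ holds only when $p\mid q-1$. This gives minimality directly, and it is the argument I would write out in full.
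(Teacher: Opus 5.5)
Your proposal is correct and follows the paper's route. Theorem \ref{thm:omniWeak} gives $\varrho^k(C)=p$, and Lemma \ref{lem:earStrongIsWeak}, used in both directions, converts this into $\rho^k(G)=p$. Your added argument for the direction $G\cong G^k_q\Rightarrow C\equiv C^k_q$ is sound: an isomorphism must preserve the two false-twin pairs and their adjacency, which forces $k^{q-1}\equiv\pm1\pmod{k^p+1}$, and $k$ has order $2p$ modulo $k^p+1$. That argument makes rigorous the step the paper's proof of Lemma \ref{lem:earStrongIsWeak} only sketches.
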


\begin{proof}
Formally, we will prove there exists a $G$ satisfying the conclusion of the theorem. By Theorem \ref{thm:omniWeak}, there exists a cycle $C = C_{k^p+1} = (V,E)$ that has $\varrho^k(C) = p$ for any $p > 1$. Of course, $\rho^k(C) = 1$. Let $F$ be the $(|V|,1)$-ear graph that is formed by adding two false twins to $C$. By Lemma \ref{lem:earStrongIsWeak} where $G := F$ and $C := C$, we have that $F \cong F^k_q$ if and only if $C \equiv C^k_q$ for any $q > 1$. This implies that $\rho^k(F) = \varrho^k(C)$. We set $G := F$ and we are done.
\end{proof}


\begin{conjecture}
    Any connected graph $G = (V,E)$ has $\rho^2(G) \leq |V| - 3$.
\label{con:period} 
\end{conjecture}

The above conjecture holds true when $G$ is an $(n,d)$-ear graph for any $n \leq 200$ and $d = 2$. We confirm this fact using simulation. Fig. \ref{fig:Desmos} shows the results of our computation. However, Conjecture \ref{con:period} is false when we drop connectedness as a sufficient condition. For example, if $G$ is the disjoint union of an $(11,2)$-ear and a $(13,2)$-ear, as shown in Fig. \ref{fig:disconnectCounter}, then $\rho^2(G) > |V|$.



   \bibliographystyle{elsarticle-num}
   \bibliography{bibliography}
   

\end{document}